\newtheorem{theorem}{Theorem}    
\newtheorem{proposition}{Proposition}
\newtheorem{corollary}{Corollary}
\theoremstyle{definition}
\newtheorem{definition}[theorem]{Definition}
\newtheorem*{remark*}{Remark}
\newcommand{\R}{\mathbb{R}}
\newcommand{\mF}{\mathcal{F}}
\title[A quantitative Birman-Menasco finiteness theorem]{A quantitative Birman-Menasco finiteness theorem and its application to crossing number}
\author[T.Ito]{Tetsuya Ito}
\address{Department of Mathematics, Kyoto University, Kyoto 606-8502, JAPAN}
\email{tetitoh@math.kyoto-u.ac.jp}
\begin{document}

\begin{abstract}
Birman-Menasco proved that there are finitely many knots having a given genus and braid index. We give a quantitative version of Birman-Menasco finiteness theorem, an estimate of the crossing number of knots in terms of genus and braid index. This has various applications of crossing numbers, such as, the crossing number of connected sum or satellites.
\end{abstract}

\maketitle

\section{Introduction}

In \cite{BM6} Birman-Menasco proved (in)finiteness theorem on closed braid representatives of knots and links; a knot or link $K$ has only finitely many closed braid representatives, up to conjugacy and exchange move. Here an \emph{exchange move} is an operation that converts an $n$-braid of the form $\alpha \sigma_{n-1} \beta \sigma_{n-1}^{-1}$ ($\alpha,\beta \in B_{n-2}$) to a braid $\alpha \sigma_{n-1}^{-1} \beta \sigma_{n-1}$ and its converse (see Figure \ref{fig:exchangemove}).
As a byproduct they proved another remarkable finiteness result;
\begin{theorem}[Birman-Menasco finiteness theorem \cite{BM6}]
\label{theorem:BM}
For given $g,n>0$, there are only finitely many knots/links with genus $g$ and braid index $n$.
\end{theorem}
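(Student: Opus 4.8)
The plan is to pass from a link to a closed braid and convert the genus constraint into a bound on the number of crossings via Seifert's algorithm. Let $L$ be a link of genus $g$ and braid index $n$, and write $L=\hat{\beta}$ for some $\beta\in B_n$ presented as a word of length $\ell$ in the generators $\sigma_1^{\pm1},\dots,\sigma_{n-1}^{\pm1}$. Applying Seifert's algorithm to the standard closed-braid diagram produces the Bennequin surface $\Sigma_\beta$, built from $n$ disks (one per strand) and $\ell$ twisted bands (one per crossing), so that $\chi(\Sigma_\beta)=n-\ell$. Assuming $\Sigma_\beta$ is connected with $\mu$ boundary components, its genus equals $\tfrac{1}{2}(2-\mu-\chi(\Sigma_\beta))=\tfrac{1}{2}(\ell-n-\mu+2)$. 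Since a Bennequin surface is a Seifert surface for $L$, this genus is at least $g$; the whole difficulty is therefore to produce a representative for which it is not much larger, so that $\ell$ is bounded in terms of $g$, $n$, and $\mu$.

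First I would observe that once we can guarantee a representative whose Bennequin surface has minimal genus $g$, we are essentially done: then $\ell=2g+n+\mu-2$ is determined, and since $\mu\le n$ this is bounded purely in terms of $g$ and $n$. As $B_n$ is finitely generated, there are only finitely many words of bounded length, hence only finitely many links $\hat{\beta}$ that can arise. Thus the entire problem reduces to the geometric statement that a link of braid index $n$ and genus $g$ admits an $n$-braid representative whose associated Bennequin surface realizes the Seifert genus (or at worst has genus bounded by a function of $g$ and $n$).

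To obtain such a representative I would begin with a minimal-genus Seifert surface $F$ for $L$ and try to isotope it into braided position relative to the braid axis $A$. This is precisely the setting of Birman--Menasco's braid foliation theory: one studies the singular foliation induced on $F$ by the pencil of half-planes bounded by $A$, eliminates the inessential singularities (closed leaves and superfluous saddles) by standard surface-cleaning moves, and analyzes the resulting combinatorial tiling of $F$ by the points where $A$ pierces $F$. Controlling this foliation bounds the number of intersections of $F$ with $A$ and the combinatorial complexity of $F$, which in turn bounds the number of bands, that is, the crossing number $\ell$, of the corresponding closed-braid presentation.

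The hard part is exactly this last control: showing that the braid foliation of a (near-)minimal genus surface can be normalized so that its complexity, and hence $\ell$, is bounded in terms of $g$ and $n$ alone. This is where the exchange moves and destabilizations enter, since they are the operations that simplify the foliation without changing the link type or raising the braid index, and where the full strength of the Birman--Menasco machinery is required. Granting this normalization, the finiteness of bounded-length words in $B_n$ closes the argument; making the resulting bound on $\ell$ explicit is precisely the quantitative refinement pursued in the present paper.
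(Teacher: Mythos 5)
Your reduction is sound as far as it goes: since $B_n$ is finitely generated, it suffices to show that every link of genus $g$ and braid index $n$ is the closure of an $n$-braid word whose length is bounded by a function of $g$ and $n$, and your Euler characteristic bookkeeping correctly converts a bound on the genus of the associated Bennequin surface into a bound on the word length $\ell$. This is indeed the paper's strategy in outline (Theorem 2 produces exactly such a word, of length at most $f(n)(-\chi(L)+n)$). But the proposal has a genuine gap at its center, in two respects. First, your preferred formulation --- that $L$ admits a minimal braid index representative whose Bennequin surface has minimal genus --- is false in general: as the paper notes, the existence of a minimal genus Bennequin surface is a special property of closed $3$-braids (Bennequin, Birman--Menasco), and for $n\geq 4$ a minimal genus Seifert surface of a closed $n$-braid may be forced to carry $b$-arcs (negative elliptic points) in its braid foliation, so it is not a disk-band Bennequin surface at all. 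Second, the hedged version ("genus bounded by a function of $g$ and $n$") is essentially equivalent to the theorem being proved, and you do not supply an argument for it: you assert that exchange moves and destabilizations let one "control" the foliation, but give no mechanism by which this control produces a bound on the number of singularities depending only on $g$ and $n$.

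The missing mechanism consists of two specific ingredients. (1) Birman--Menasco's Lemma 4 (Proposition \ref{prop:BM} of the paper): for a surface foliated by the braid fibration with $\partial F$ a braid of minimal index, exchange moves allow one to assume there are no elliptic points of the low-valence types $(1,0)$, $(0,2)$, $(0,3)$, $(1,1)$. (2) The Euler characteristic equality for the induced tiling, which, once those types are eliminated, expresses $-4\chi(F)$ as a nonnegative combination of the remaining vertex counts and hence bounds the total number of tiles; in the paper's normalization this yields $2R_{aa}+R_{ab}\leq -2\chi(L)+2b(L)$. The braid word is then read off the foliation of $F$ itself, each $aa$- or $ab$-tile contributing a block of at most $O(n)$ crossings --- note that $F$ need not be the Bennequin surface of the resulting word, so your identification of "number of bands" with "complexity of the foliation of $F$" also needs this intermediate step. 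Without (1) and (2), or some substitute for them, the finiteness claim is not established.
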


\begin{figure}[htbp]
\begin{center}
\includegraphics*[bb=190 607 421 715,width=80mm]{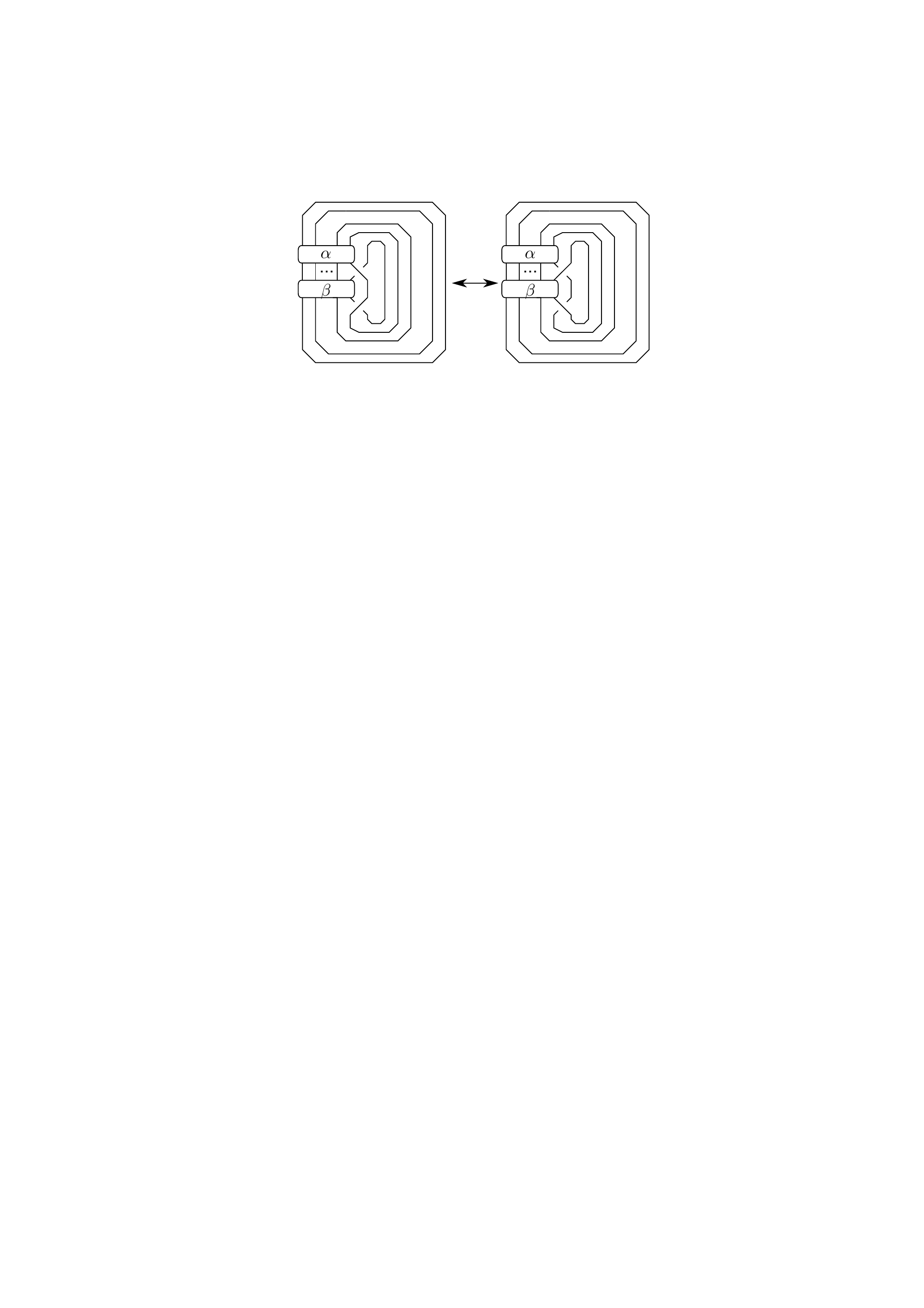}
\caption{Exchange move.}
\label{fig:exchangemove}
\end{center}
\end{figure}

In this paper, we give a quantitative version of Theorem \ref{theorem:BM}, an estimate of the crossing number by braid index and genus.
For a link $L$ in $S^{3}$, let $b(L),c(L)$ and $\chi(L)$ be the braid index, the minimum crossing number, and the maximum euler characteristic of Seifert surface, respectively.

\begin{theorem}[Quantitative Birman-Menasco finiteness theorem]
\label{theorem:main}
Let 
\[ f(n)= \begin{cases} 
1 & (n=2) \\
\frac{5}{3} & (n=3)\\
(2n-5)& (n>3).
\end{cases}\]
For a link $L$ in $S^{3}$,
\[ -\chi(L)+b(L) \leq c(L) \leq f(b(L))(-\chi(L)+b(L)) \]
holds.
\end{theorem}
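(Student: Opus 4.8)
The plan is to establish the two inequalities by completely different means. For the lower bound $-\chi(L)+b(L)\le c(L)$, I would start from an arbitrary diagram $D$ of $L$ with $c$ crossings and $s$ Seifert circles. Seifert's algorithm applied to $D$ yields a Seifert surface built from $s$ disks and $c$ bands, hence of Euler characteristic $s-c$, so that $s-c\le\chi(L)$ since $\chi(L)$ is the \emph{maximum} over Seifert surfaces. By Yamada's theorem the least number of Seifert circles over all diagrams of $L$ equals the braid index, whence $s\ge b(L)$. Applying both facts to a minimal crossing diagram (so that $c=c(L)$) gives $c(L)\ge s-\chi(L)\ge b(L)-\chi(L)$, as desired.

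The upper bound is the essential content and is where the quantitative form of Theorem \ref{theorem:BM} must be extracted. My plan is to produce an economical closed braid representative of $L$ on exactly $n=b(L)$ strands and to count the crossings of its closure. I would first represent $L$ as a braided surface: a Bennequin surface assembled from $n$ parallel disks together with a family of bands, each band being a band generator $a_{ij}$ joining strands $i$ and $j$, so that the Euler characteristic of the surface equals $n$ minus the number of bands. The crucial step is to arrange, using destabilizations, exchange moves, and the braid-foliation analysis underlying Birman--Menasco finiteness, that this surface can be taken on $b(L)$ strands while remaining as efficient as possible, so that the number of bands is governed by $-\chi(L)+b(L)$. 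This is exactly the point at which the finiteness mechanism must be made quantitative, since a minimal genus surface and a minimal braid index representative need not a priori be compatible, and one must push the strand number down to the braid index while keeping track of the band structure of the surface.

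Given such a representative, the second step is combinatorial: rewrite the band-generator word as an ordinary word in $\sigma_1,\dots,\sigma_{n-1}$ and bound the number of crossings of the resulting closed braid diagram. A single band generator $a_{ij}$ expands into $2|i-j|-1$ standard crossings, so the naive per-band estimate is of size $2n-3$; the sharper factor $f(n)$ should come from a global count over the whole annular closed-braid diagram rather than a band-by-band sum, exploiting the cyclic symmetry of the closure and cancellations between neighbouring bands. Combining the band count from the previous step with this conversion, and absorbing into the constant both the per-band cost and any discrepancy between the genus realizable on $b(L)$ strands and $\chi(L)$, should yield a diagram with at most $f(b(L))\bigl(-\chi(L)+b(L)\bigr)$ crossings; the special values $f(2)=1$ and $f(3)=\tfrac{5}{3}$ reflect the very limited room for long bands when there are only two or three strands.

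The step I expect to be the main obstacle is the first part of the upper bound, namely controlling the crossing number throughout the reduction to $b(L)$ strands. Destabilizations and exchange moves individually do not increase the number of crossings, but making such moves available can require rearranging the braid, and the quantitative heart of the theorem is to bound the total cost of these rearrangements by the linear factor $f(n)$. I would therefore expect the bulk of the work to lie in a careful braid-foliation bookkeeping that simultaneously tracks the Euler characteristic, the number of strands, and the crossing number at each stage of the Birman--Menasco reduction.
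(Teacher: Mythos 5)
Your lower bound argument is correct and coincides with the paper's: apply Seifert's algorithm to a minimal crossing diagram to get $\chi(L)\geq s(D)-c(L)$, and use $s(D)\geq b(L)$ (Yamada's theorem) to conclude $c(L)\geq -\chi(L)+b(L)$.

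The upper bound, however, has a genuine gap, and it begins with your very first step: you propose to realize a maximal--Euler-characteristic Seifert surface of $L$ as a Bennequin surface (parallel disks joined by band generators) on exactly $b(L)$ strands. Such a surface does not exist in general. The paper itself flags this: the case $b(L)=3$ had been handled separately precisely because closed $3$-braids enjoy the special property of bounding minimal genus Bennequin surfaces, and this property fails for larger braid index. Note that if a Bennequin surface on $n=b(L)$ strands did always exist, the band count would be exactly $n-\chi(F)=b(L)-\chi(L)$ and the theorem would follow almost immediately from your per-band crossing estimate; the entire difficulty of the theorem is to obtain a comparable count without that assumption. The paper's substitute is to put a maximal-$\chi$ incompressible Seifert surface bounded by a closed $b(L)$-braid into general braid foliation position, where the surface decomposes into $aa$-, $ab$-, and $bb$-tiles --- the latter two involve $b$-arcs and have no counterpart in your band picture ($bb$-tiles contribute no crossings at all, and $ab$-tiles contribute at most $n-3$). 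Birman--Menasco's lemma (exchange moves eliminate all low-valence elliptic points while keeping the strand number equal to $b(L)$), combined with an Euler characteristic count over the induced cell decomposition of the foliated surface, yields the key inequality $2R_{aa}+R_{ab}\leq 2(-\chi(L)+b(L))$ for the numbers of tiles; only then is the braid word read off the foliation and the crossings counted. Your closing remark that the main obstacle is ``bookkeeping that simultaneously tracks the Euler characteristic, the number of strands, and the crossing number'' correctly locates where the work lies, but no mechanism is supplied, and the actual argument is not a crossing-controlled reduction of a diagram at all --- it is a static count on a foliated surface whose existence is guaranteed by isotopy, not constructed from a diagram.

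A secondary discrepancy: your route to the sharpened constant ($2n-5$ rather than $2n-3$ per $aa$-tile) via ``cancellations between neighbouring bands'' is not how the improvement is obtained. When there are no $ab$-tiles, the paper conjugates by the full cycle $\sigma_1\sigma_2\cdots\sigma_{n-1}$ to arrange that the most expensive band type is the least frequent, which is what produces the averaged constants $1$, $\tfrac{5}{3}$, and $2n-5$; when $ab$-tiles are present, the $b$-arc emanating from a leftmost negative elliptic point separates the first and last strands in every fiber, which forbids the longest bands outright and simultaneously caps the contribution of each $ab$-tile at $n-3$ crossings.
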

The case $b(L)=2$ is easy and the case $b(L)=3$ is proven in \cite{It-fertile} by using a special property of closed 3-braid that a closed 3-braid always bounds a minimum genus Bennequin surface, a minimum genus Seifert surface whose braid foliation has only aa-singular points \cite{Be,BM2}.

In the following, we restrict our attention to knots, and use the knot genus instead of the maximum euler characteristic. 

Theorem \ref{theorem:main} implies that $2g(K)-1+b(K)$ is a linear approximation of the crossing number. This has several applications to the crossing numbers since the genus is one of the most familiar and well-studied invariant whereas the minimum crossing number is one of the most difficult invariant whose basic properties are still unknown.

It is a famous conjecture that $c(K\#K')=c(K)+c(K')$ \cite[Problem 1.65]{Ki}. 
Theorem \ref{theorem:main} immediately gives an estimate of the crossing number of composite knots.

\begin{corollary}
\label{cor:composite}
For a knot $K,K'$ in $S^{3}$ let $N=\max\{b(K),b(K')\}$.
\[ c(K\#K') \geq \frac{1}{f(b(K))}c(K) + \frac{1}{f(b(K'))}c(K') \geq \frac{1}{f(N)}(c(K)+c(K'))\]
\end{corollary}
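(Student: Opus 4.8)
The plan is to apply Theorem~\ref{theorem:main} to the connected sum $K\#K'$ directly, using the standard additivity properties of the genus and braid index under connected sum. For knots we work with the genus, so recall that $-\chi(K)=2g(K)-1$ for a knot, and that both genus and braid index are additive in the appropriate sense: $g(K\#K')=g(K)+g(K')$ (additivity of Seifert genus, due to Schubert) and $b(K\#K')=b(K)+b(K')-1$ (additivity of braid index, due to Birman--Menasco). These two facts are the structural input; everything else is bookkeeping with the inequalities $f$ already provides.

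First I would write the left-hand (lower bound) half of Theorem~\ref{theorem:main} for each of $K$ and $K'$ in the form $c(K)\le f(b(K))\,(2g(K)-1+b(K))$, and similarly for $K'$. Inverting, this reads $2g(K)-1+b(K)\ge c(K)/f(b(K))$, and likewise for $K'$. Next I would apply the \emph{other} direction of Theorem~\ref{theorem:main} to the composite knot, namely the lower bound $c(K\#K')\ge -\chi(K\#K')+b(K\#K') = 2g(K\#K')-1+b(K\#K')$. Now I substitute the additivity formulas: $2g(K\#K')-1 = (2g(K)-1)+(2g(K'))$, hmm — more carefully, $2g(K\#K')-1+b(K\#K') = 2(g(K)+g(K'))-1 + (b(K)+b(K')-1)$, which I would regroup as $\bigl(2g(K)-1+b(K)\bigr)+\bigl(2g(K')-1+b(K')\bigr)$, since the constant terms $-1$ and $-1$ combine correctly with the $-1$ from braid additivity. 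This regrouping is the one genuinely delicate step, so I would verify it carefully: the left side contributes $-1$, and the expansion produces $2g(K)-1+b(K)$ plus $2g(K')-1+b(K')$ with total constant $-1-1 = -2$ from the two genus terms against $b(K)+b(K')-1$ from braid additivity, so I must check the arithmetic lines up to give exactly one surplus $-1$.

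Having split $c(K\#K')$ below by the sum of the two quantities $2g(K)-1+b(K)$ and $2g(K')-1+b(K')$, I then bound each of these from below using the first step: $2g(K)-1+b(K)\ge c(K)/f(b(K))$ and similarly for $K'$. This yields immediately
\[
c(K\#K') \ge \frac{1}{f(b(K))}c(K) + \frac{1}{f(b(K'))}c(K'),
\]
which is the first claimed inequality. For the second inequality I would use that $f$ is nondecreasing in $n$ (visible from the piecewise definition: $1 < \tfrac{5}{3} \le 2n-5$ for $n\ge 4$, and the value jumps up monotonically), so that $f(b(K))\le f(N)$ and $f(b(K'))\le f(N)$ with $N=\max\{b(K),b(K')\}$; hence $1/f(b(K))\ge 1/f(N)$ and likewise for $K'$, giving
\[
\frac{1}{f(b(K))}c(K) + \frac{1}{f(b(K'))}c(K') \ge \frac{1}{f(N)}\bigl(c(K)+c(K')\bigr).
\]

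The main obstacle I anticipate is not analytic but is the correct invocation of braid-index additivity $b(K\#K')=b(K)+b(K')-1$; this is a nontrivial theorem of Birman--Menasco rather than something elementary, and the $-1$ in it is exactly what makes the constant-term arithmetic in the regrouping step work out. I would make sure to cite it explicitly and to double-check monotonicity of $f$ at the boundary cases $n=3$ and $n=4$, where the formula changes definition, so that the final comparison $f(b(\cdot))\le f(N)$ is valid for all admissible braid indices.
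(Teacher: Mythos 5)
Your proposal is correct and follows essentially the same route as the paper: apply the lower bound of Theorem \ref{theorem:main} to $K\#K'$, use additivity of genus and of $b(\cdot)-1$ under connected sum to regroup as $(2g(K)-1+b(K))+(2g(K')-1+b(K'))$, then bound each summand below by $c(K)/f(b(K))$ via the upper bound of Theorem \ref{theorem:main}, and finish with monotonicity of $f$. (The only quibble is a verbal slip: the inequality $c(K)\le f(b(K))(2g(K)-1+b(K))$ is the upper-bound half of the theorem, not the "lower bound half" as you call it.)
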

\begin{proof}
Recall that $g(K)$ is additive under connected sum, and that $b(K)-1$ is also additive under connected sum \cite{BM4}.
Thus 
\begin{align*}
c(K\# K') & \geq b(K\#K') +2g(K\# K') -1\\
& = (2g(K)-1+b(K))+(2g(K')-1+b(K'))\\
&\geq \frac{c(K)}{f(b(K))}+\frac{c(K')}{f(b(K'))}\\
& \geq \frac{1}{f(N)}(c(K)+c(K'))
\end{align*}
\end{proof}
In \cite{La1} Lackenby showed that $c(K\#K') \geq \frac{1}{152}(c(K)+c(K'))$. Although the estimate given in Corollary \ref{cor:composite} gets worse as $b(K)$ increases, when $b(K)$ is small it gives a better estimate.

Our proof gives an estimate of regularity introduced in \cite{Ma} which plays an important role to relate two different problem; a genericity of hyperbolic knots/links and the additivity of crossing numbers. 
\begin{definition}
For $\lambda \in \R$, a knot $K$ is \emph{$\lambda$-regular} if $c(K') \geq \lambda c(K)$ if $K'$ contains $K$ as its prime factor.
\end{definition}

\begin{corollary}
\label{cor:regularity}
For a knot $K$ in $S^{3}$, $K$ is $\frac{1}{f(b(K))}$-regular.
\end{corollary}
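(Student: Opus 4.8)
The plan is to deduce this directly from the lower bound of Theorem \ref{theorem:main}, mirroring the computation already carried out in Corollary \ref{cor:composite} but isolating a single prime factor. Suppose $K'$ contains $K$ as a prime factor, so that $K' = K \# K''$ where $K''$ is the connected sum of the remaining prime factors (possibly the unknot). First I would invoke the additivity of the genus and of $b(\cdot)-1$ under connected sum, exactly as in the proof of Corollary \ref{cor:composite}, to write
\[ 2g(K') - 1 + b(K') = \bigl(2g(K) - 1 + b(K)\bigr) + \bigl(2g(K'') - 1 + b(K'')\bigr). \]

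Next I would apply the lower bound $c(L) \geq -\chi(L) + b(L) = 2g(L) - 1 + b(L)$ from Theorem \ref{theorem:main} to the knot $K'$, and then discard the contribution of the complementary factor. The point is that $2g(K'') - 1 + b(K'') \geq 0$, since $b(K'') \geq 1$ forces $2g(K'') + b(K'') \geq 1$; this quantity vanishes precisely when $K''$ is the unknot, consistent with $K' = K$. Discarding this nonnegative term yields the clean inequality
\[ c(K') \geq 2g(K) - 1 + b(K). \]

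Finally I would feed in the upper bound from Theorem \ref{theorem:main} applied to $K$ itself, namely $c(K) \leq f(b(K))\bigl(2g(K) - 1 + b(K)\bigr)$, which rearranges to $2g(K) - 1 + b(K) \geq \frac{1}{f(b(K))}\,c(K)$. Chaining the last two displays gives $c(K') \geq \frac{1}{f(b(K))}\,c(K)$, which is exactly the statement that $K$ is $\frac{1}{f(b(K))}$-regular.

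I do not expect any serious obstacle here: the corollary is essentially a one-factor specialization of Corollary \ref{cor:composite}, and the only step requiring a moment's attention is confirming that the complementary factor contributes a nonnegative amount so that it can be dropped while preserving the inequality direction. That verification is immediate from $b(K'') \geq 1$, so the whole argument is a short three-line chain of inequalities.
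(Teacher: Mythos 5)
Your argument is correct and is essentially the same as the paper's: the paper states this corollary without a separate proof precisely because it follows from the computation in Corollary \ref{cor:composite}, and your write-up is just that computation specialized to one factor, with the only extra observation being that $2g(K'')-1+b(K'') \geq 0$ for the complementary factor (which holds since $g(K'')\geq 0$ and $b(K'')\geq 1$). Nothing is missing.
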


Similarly, we have an estimate of the crossing number of satellite knots and an estimate of the asymptotic crossing number.
The \emph{asymptotic crossing number} $ac(K)$ of $K$ is defined by\footnote{The definition of the asymptotic crossing number given here is a version appeared in Kirby's problem list \cite{Ki}. In \cite{FH} Freedman-He called this the \emph{quadratic asymptotic crossing number} and use different definition for the assymptotic crossing number, which we denote by $ac_{FH}(K)$. Since Freedman-He proved $ac(K) \geq ac_{FH}(K) \geq 2g(K)-1$, the inequality in Corollary \ref{cor:satellite} is valid for $ac_{FH}(K)$.}
\[ ac(K)= \inf_{w\geq 1} \left\{\frac{c(K_w)}{w^{2}}\right\} \]
where the infimum is taken over all satellites $K_w$ of $K$ with winding number $w \geq 0$. It is conjectured $ac(K)=c(K)$ \cite[Problem 1.68]{Ki}.

\begin{corollary}
\label{cor:satellite}
Let $K$ be a satellite knot with companion knot $K_0$ and pattern $C$, whose winding number (homological degree) is $w$. Then
\[ c(K) \geq \frac{w^2}{f(b(K_0))}c(K_0)  - w^2 b(K_0)\]
In particular, 
\[ ac(K) \geq \frac{1}{f(b(K))}c(K) -b(K).\]
\end{corollary}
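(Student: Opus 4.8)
The plan is to combine two ingredients: the asymptotic crossing number estimate of Freedman–He and Theorem \ref{theorem:main} applied to the companion. The essential observation is that the quadratic factor $w^2$ cannot be extracted from a naive Seifert-genus comparison, since the genus of a satellite grows only linearly in the winding number (as one sees already for $(p,q)$-cables, where the winding number is $p$ and $2g-1$ is affine in $p$). For this reason I would not try to bound $-\chi(K)$ directly; instead I would feed in the genuinely quadratic input $ac(K_0)\geq 2g(K_0)-1$ recorded in the footnote, and reserve Theorem \ref{theorem:main} for converting a genus bound on the companion into a crossing-number bound.

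First I would recall that for a knot $-\chi(K_0)=2g(K_0)-1$, and that the satellite $K$ is one of the knots over which the infimum defining $ac(K_0)$ is taken. Since $c(K)/w^2$ is one of the terms in that infimum, it is at least the infimum itself, so Freedman–He's inequality gives
\[ \frac{c(K)}{w^2}\geq ac(K_0)\geq 2g(K_0)-1=-\chi(K_0), \]
whence $c(K)\geq w^2\bigl(-\chi(K_0)\bigr)$.

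Next I would apply Theorem \ref{theorem:main} to the companion $K_0$. Rearranging its upper bound $c(K_0)\leq f(b(K_0))\bigl(-\chi(K_0)+b(K_0)\bigr)$ yields
\[ -\chi(K_0)\geq \frac{c(K_0)}{f(b(K_0))}-b(K_0). \]
Multiplying by $w^2\geq 0$ and substituting into the previous display produces
\[ c(K)\geq w^2\bigl(-\chi(K_0)\bigr)\geq \frac{w^2}{f(b(K_0))}c(K_0)-w^2 b(K_0), \]
which is the first claimed inequality.

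Finally, for the ``in particular'' statement I would specialize the first inequality to the case where the given knot $K$ is itself the companion and $K_w$ ranges over its winding-number-$w$ satellites, so that $c(K_w)\geq \frac{w^2}{f(b(K))}c(K)-w^2 b(K)$; dividing by $w^2$ and taking the infimum over $w\geq 1$ gives $ac(K)\geq \frac{1}{f(b(K))}c(K)-b(K)$. The step I would be most careful about — and the only real subtlety — is the invocation of the quadratic Freedman–He bound: one must use the quadratic asymptotic crossing number $ac$, for which $ac\geq 2g-1$ holds, rather than attempt a self-contained genus argument, which would deliver only a linear and therefore strictly weaker estimate.
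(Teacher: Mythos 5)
Your argument is correct and follows essentially the same route as the paper: invoke the Freedman--He bound to get $c(K)\geq w^{2}(2g(K_0)-1)$, then apply the upper bound of Theorem \ref{theorem:main} to the companion $K_0$ and multiply by $w^{2}$. The only cosmetic difference is that you route the Freedman--He input through the definition of $ac(K_0)$ and the footnote's inequality $ac\geq 2g-1$, whereas the paper cites $c(K)\geq w^{2}(2g(K_0)-1)$ from \cite{FH} directly; your treatment of the ``in particular'' clause by dividing by $w^{2}$ and taking the infimum is exactly what the paper leaves implicit.
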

\begin{proof}
By \cite{FH} it is shown that $c(K) \geq w^{2}(2g(K_0)-1)$. Therefore
\begin{align*}
c(K) & \geq  w^{2}(2g(K_0)-1)\geq w^{2}\left( \frac{1}{f(b(K_0))}c(K_0)  - b(K_0)\right)
\end{align*}
\end{proof}
 Although Corollary \ref{cor:satellite} is meaningful only if $b(K)$ is small compared with $c(K)$, it gives a linear lower bound of $ac(K)$ in terms of $c(K)$.

A famous \emph{satellite crossing number conjecture} asserts $c(K) \geq c(K_0)$ \cite[Problem 1.67]{Ki}. Corollary \ref{cor:satellite} says that the satellite crossing number conjecture is true when $w$ and $c(K_0)$ are large enough compared with $b(K_0)$. 

A weaker and more tractable version of the satellite crossing number conjecture, $c(K_{p,q}) \geq c(K)$ for the $(p,q)$-cable $K_{p,q}$ of $K$ which we call \emph{the cabling crossing number conjecture}, is still open. For a braided $p$-cable, a satellite knot whose pattern $C$ is a closed $p$-braid in the solid torus, we have another estimate that gives more supporting evidences for the cabling crossing number conjecture and the satellite crossing number conjecture which works even if $b(K)$ is large;
\begin{corollary}
\label{cor:braided-cable}
If $K_p$ is a braided $p$-cable of $K$, then 
\[ c(K_p) \geq \frac{p}{f(b(K))}c(K)+(p-1)\]
Thus, if $p \geq f(b(K))$ then $c(K_p)\geq c(K)$. 
\end{corollary}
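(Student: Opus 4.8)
The plan is to apply Theorem \ref{theorem:main} twice: once as a \emph{lower} bound for the cable $K_p$ and once as an \emph{upper} bound for the companion $K$, and to bridge the two estimates by controlling how the braid index and the genus behave under the braided cabling operation. First I would invoke the left-hand inequality of Theorem \ref{theorem:main} for the knot $K_p$, giving
\[ c(K_p) \geq -\chi(K_p) + b(K_p) = 2g(K_p) - 1 + b(K_p). \]
The whole problem then reduces to bounding $g(K_p)$ and $b(K_p)$ from below in terms of $g(K)$ and $b(K)$.

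Since a braided $p$-cable is a satellite with companion $K$ and winding number $w = p$, I would use two geometric facts. The classical genus inequality for satellites gives $g(K_p) \geq p\, g(K)$, which is equivalent to $-\chi(K_p) \geq p(-\chi(K)) + (p-1)$. For the braid index, a braided $p$-cable of a minimal-strand closed braid representative of $K$ is naturally a closed braid on $p\, b(K)$ strands, so $b(K_p) \leq p\, b(K)$; the matching reverse inequality $b(K_p) \geq p\, b(K)$ is the content of the braid index formula for cables (Williams). Assuming $K$ is nontrivial and the pattern is essential so that these are non-degenerate, combining them yields
\[ -\chi(K_p) + b(K_p) \geq p(-\chi(K)) + (p-1) + p\, b(K) = p\bigl(-\chi(K) + b(K)\bigr) + (p-1). \]

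It remains to convert $-\chi(K) + b(K)$ into $c(K)$. Here I would apply the right-hand inequality of Theorem \ref{theorem:main} to $K$, namely $c(K) \leq f(b(K))\bigl(-\chi(K) + b(K)\bigr)$, which rearranges to $-\chi(K) + b(K) \geq c(K)/f(b(K))$. Substituting into the previous display gives
\[ c(K_p) \geq -\chi(K_p) + b(K_p) \geq \frac{p}{f(b(K))}\, c(K) + (p-1), \]
which is the asserted bound. The final claim follows at once: if $p \geq f(b(K))$ then the coefficient $p/f(b(K)) \geq 1$, so $c(K_p) \geq c(K) + (p-1) \geq c(K)$.

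I expect the main obstacle to be the braid-index step $b(K_p) \geq p\, b(K)$ rather than the genus step. The upper bound is elementary from the cabling construction, but the matching lower bound genuinely uses that cabling multiplies the braid index for essential patterns; one must also rule out the degenerate situations (for instance a trivial companion or an inessential pattern) in which the cable can simplify and the inequality fails. The remaining steps are purely formal manipulations of the two inequalities supplied by Theorem \ref{theorem:main}.
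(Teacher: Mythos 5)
Your proposal is correct and follows essentially the same route as the paper: the lower bound of Theorem \ref{theorem:main} applied to $K_p$, the satellite genus inequality $g(K_p)\geq p\,g(K)$ together with Williams' braid index formula $b(K_p)=p\,b(K)$, and then the upper bound of Theorem \ref{theorem:main} applied to $K$. The paper's proof is exactly this computation, so there is nothing to add.
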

\begin{proof}
Since $g(K_p)=pg(K)+g(C) \geq pg(K)$ and $b(K_p)=pb(K)$ \cite{Wi} we get
\begin{align*}
c(K_p) & \geq 2g(K_p)-1+b(K_p) \geq p(2g(K)-1+b(K)) + (p-1)\\
& \geq \frac{p}{f(b(K))}c(K) + p-1
\end{align*}
\end{proof}

In particular, since $f(3)=\frac{5}{3}<2$, the cabling crossing number conjecture is true for closed 3-braids:
\begin{corollary}
If $b(K) \leq 3$ then $c(K_{p,q}) \geq c(K)$ for all $(p,q)$-cables $K_{p,q}$ of $K$.
\end{corollary}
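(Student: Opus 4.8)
The plan is to read this off directly from Corollary \ref{cor:braided-cable}, the only work being to recognize a $(p,q)$-cable as a particular braided $p$-cable and then to check a numerical inequality. First I would record that $K_{p,q}$ is a braided $p$-cable: its pattern is the $(p,q)$-torus link, which sits in the solid torus as the closure of $(\sigma_1\sigma_2\cdots\sigma_{p-1})^q \in B_p$, so the pattern is a closed $p$-braid and the winding number of $K_{p,q}$ is exactly $p$. This matches the hypothesis of Corollary \ref{cor:braided-cable} with the same $p$.

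Next I would dispose of the degenerate cases. If $K$ is the unknot then $c(K)=0$ and the inequality is trivial, so I may assume $b(K)\in\{2,3\}$; similarly the convention $p\geq 2$ for a genuine cable is harmless, since $K_{1,q}$ is isotopic to $K$. With these reductions the core of the argument is the comparison $p\geq f(b(K))$: because $f$ is increasing on $\{2,3\}$ with $f(2)=1$ and $f(3)=\tfrac{5}{3}$, we get $f(b(K))\leq \tfrac{5}{3}<2\leq p$, so $p\geq f(b(K))$ holds automatically.

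Finally I would invoke Corollary \ref{cor:braided-cable}: since $\tfrac{p}{f(b(K))}\geq 1$ and $p-1\geq 0$, the bound $c(K_{p,q})\geq \tfrac{p}{f(b(K))}c(K)+(p-1)$ gives $c(K_{p,q})\geq c(K)$, as desired. I expect no real obstacle here; the content is entirely front-loaded into Theorem \ref{theorem:main} and Corollary \ref{cor:braided-cable}, and the only delicate point is the bookkeeping that identifies the $(p,q)$-cable as a braided $p$-cable with winding number $p$. The decisive structural fact is simply $f(3)<2$, which is what makes braid index $3$ fall within reach of the cabling crossing number conjecture.
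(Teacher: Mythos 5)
Your proposal is correct and matches the paper's (implicit) argument exactly: the paper derives this corollary as an immediate consequence of Corollary \ref{cor:braided-cable} via the observation that $f(3)=\tfrac{5}{3}<2\leq p$, which is precisely your decisive step. Your additional bookkeeping (identifying $K_{p,q}$ as a braided $p$-cable with pattern the closure of $(\sigma_1\cdots\sigma_{p-1})^q$, and disposing of the unknot and $p=1$ cases) is sound and only makes explicit what the paper leaves unsaid.
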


It is interesting to ask whether Corollary \ref{cor:braided-cable} can be extended to general satellite knot. We used the braided $p$-cable assumption is to guarantee $b(K_p)=pb(K)$. It looks to be reasonable to expect $b(K_w) \geq wb(K)$ for satellite $K_w$ of $K$ with winding number $w$ so the same conclusion as Corollary \ref{cor:braided-cable} holds for general satellite knot $K_w$ with winding number $w$. 

In \cite{La2} Lackenby showed $c(K) \geq \frac{1}{10^{13}}c(K_0)$ if $K$ is a satellite of $K_0$. Since this $\frac{1}{10^{13}}$ estimate uses an estimate for $(2,t)$-cable of $K_{2,t}$ of $K_0$ and Corollary 
\ref{cor:braided-cable} gives an improvement of Lackenby's estimate $\frac{1}{119024}(c(K_0)-|t|)\leq c(K_{2,t})$ \cite[Theorem 5.1]{La2}, one can improve Lackenby's $\frac{1}{10^{13}}$ bound when $b(K_0)$ is not so large. Indeed, combining Lackenby's another result and our result we give a dramatic improvement of the estimate of the crossing number of satellite knots when $b(K_0)$ is not large.

\begin{corollary}
\label{cor:satellite2}
If $K$ is a satellite of $K_0$, 
\[ c(K) \geq \frac{1}{76f(b(K_0))}c(K_0)\]
\end{corollary}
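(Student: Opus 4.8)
The plan is to feed our braided-cable estimate (Corollary \ref{cor:braided-cable} with $p=2$) into the satellite-to-cable reduction already present in Lackenby's work \cite{La2}. Recall that the bound $c(K)\geq \frac{1}{10^{13}}c(K_0)$ is obtained in two logically separate stages: first a topological reduction which produces, for a suitable $(2,t)$-cable $K_{2,t}$ of the companion $K_0$, an inequality of the shape $c(K)\geq \frac{1}{152}c(K_{2,t})$; and second Lackenby's own cable estimate $c(K_{2,t})\geq \frac{1}{119024}(c(K_0)-|t|)$ from \cite[Theorem 5.1]{La2}. The inflation to $10^{13}$ comes almost entirely from the second stage, and in particular from the penalty term $-|t|$, which degrades the bound when the twisting $t$ forced by the geometry is large. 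The idea is to keep the reduction stage verbatim and replace the weak cable estimate by Corollary \ref{cor:braided-cable}.

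The key observation is that for every integer $t$ the cable $K_{2,t}$ is a braided $2$-cable: its pattern is the closure of $\sigma_1^{\,t}\in B_2$, a closed $2$-braid in the solid torus, regardless of the sign of $t$. Hence Corollary \ref{cor:braided-cable} applies with $p=2$ and $K=K_0$, giving
\[ c(K_{2,t}) \geq \frac{2}{f(b(K_0))}c(K_0) + 1 \geq \frac{2}{f(b(K_0))}c(K_0), \]
an estimate which, crucially, carries no $-|t|$ penalty and therefore stays strong no matter how large the twisting produced by the reduction is. Chaining this with the reduction stage yields
\[ c(K) \geq \frac{1}{152}c(K_{2,t}) \geq \frac{1}{152}\cdot\frac{2}{f(b(K_0))}c(K_0) = \frac{1}{76\,f(b(K_0))}c(K_0), \]
using $\tfrac{2}{152}=\tfrac{1}{76}$, which is the asserted inequality.

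The main obstacle is entirely in the first stage: one must isolate from \cite{La2} the precise statement of the reduction together with its universal constant, and verify that the cable it outputs is exactly a $(2,t)$-cable to which Corollary \ref{cor:braided-cable} applies. In particular one has to check that Lackenby's reduction does not itself secretly invoke the weak bound of \cite[Theorem 5.1]{La2}, so that substituting our estimate in its place is legitimate. Once this separation is confirmed, the dramatic improvement from $10^{13}$ to $76$ is simply the combined effect of deleting the $-|t|$ term and replacing the factor $\frac{1}{119024}$ by $\frac{2}{f(b(K_0))}$.
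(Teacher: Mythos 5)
Your proposal is correct and follows essentially the same route as the paper: invoke Lackenby's reduction (\cite[Theorem 1.5]{La2}, which gives $c(K)\geq \frac{1}{152}c(L)$ for a knot $L$ that is either $K_0$ or a cable of $K_0$) and then feed Corollary \ref{cor:braided-cable} with $p=2$ into the cable case to get $c(L)\geq \frac{2}{f(b(K_0))}c(K_0)$. The one point to tidy up is the alternative in which Lackenby's theorem returns $K_0$ itself rather than a cable --- your write-up assumes the output is always a $(2,t)$-cable --- though the paper's own proof passes over this case just as quickly.
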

\begin{proof}
By \cite[Theorem 1.5]{La2}, there is a knot $L$ which is either $K_0$ or a cable of $K_0$ such that $c(K) \geq \frac{1}{152}c(L)$. 
By Corollary \ref{cor:braided-cable}, if $L$ is a cable of $K_0$ then $c(L) \geq \frac{2}{f(b(K_0))}c(K_0)$ hence
\[
c(K) \geq \frac{1}{152} c(L) \geq \frac{1}{152}\left(\frac{2}{f(b(K_0))}c(K_0)\right) = \frac{1}{76f(b(K_0))}c(K_0)\]
\end{proof}

Finally we give another application. The \emph{braid index problem} is a problem to algorithmically compute the braid index of a link $L$ from a given diagram $D$ of $L$. Strictly speaking, to make it as a decision problem, by the braid index problem we mean the problem to determine, for a given diagram $D$ of $L$ and a natural number $n$, determine whether $b(L) \leq n$ or not. Since $K$ is the unknot if and only if $b(K)=1$, this problem includes the \emph{unknotting problem}, a problem to recognize the unknot.

\begin{theorem}
The braid index problem is solvable.
\end{theorem}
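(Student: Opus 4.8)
The plan is to reduce the decision ``$b(L)\le n$?'' to a finite search, using Theorem \ref{theorem:main} to bound the search space and the solvability of the link equivalence problem to carry out the comparisons. The two external computable ingredients I rely on are classical: first, that the maximal Euler characteristic $\chi(L)$ of a Seifert surface is algorithmically computable from a diagram (via normal surface theory); and second, that it is decidable whether two given diagrams represent the same link. Neither of these alone bounds the complexity of a braid representative, which is precisely where Theorem \ref{theorem:main} enters.

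First I would compute $\chi(L)$ and form the explicit number $C=f(n)(-\chi(L)+n)$. The key observation is that $b(L)\le n$ forces a braid representative of bounded complexity. Indeed, since $f(m)$ and $-\chi(L)+m$ are nondecreasing (and nonnegative in the relevant range), if $b(L)\le n$ then Theorem \ref{theorem:main} gives $c(L)\le f(b(L))(-\chi(L)+b(L))\le f(n)(-\chi(L)+n)=C$. More importantly, the proof of Theorem \ref{theorem:main} does not merely bound $c(L)$: it exhibits an explicit closed $b(L)$-braid representative of $L$ whose number of crossings, equivalently whose braid word length, is at most $f(b(L))(-\chi(L)+b(L))\le C$. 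Hence, whenever $b(L)\le n$, the link $L$ is the closure of some braid on $m\le n$ strands given by a word of length at most $C$ in the generators $\sigma_1^{\pm 1},\dots,\sigma_{m-1}^{\pm 1}$.

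The algorithm is then immediate. For each $m$ with $1\le m\le n$, enumerate the finitely many braid words of length at most $C$ on $m$ strands; for each such word $\beta$, form the closure diagram $\widehat{\beta}$ and test, using solvability of the link equivalence problem, whether $\widehat{\beta}$ represents $L$. Output ``$b(L)\le n$'' if and only if some $\beta$ passes this test. If the procedure answers yes it has produced a genuine $\le n$-strand braid representative, so $b(L)\le n$; conversely, if $b(L)\le n$ then the representative supplied by the proof of Theorem \ref{theorem:main} has word length at most $C$ and therefore appears in the enumeration. Since $\chi(L)$, the bound $C$, the finite list of braid words, and each equivalence test are all computable, the procedure always halts with the correct answer.

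The step I expect to be the crux is the \emph{a priori} bound on the complexity of a minimal braid index representative. It is not enough to bound $c(L)$: a minimal crossing diagram need not be a closed braid on $b(L)$ strands, and a diagram realizing the minimal number of Seifert circles (equal to $b(L)$ by Yamada's theorem) could a priori carry far more than $C$ crossings, in which case a naive enumeration of braid words would never terminate on negative instances. This is exactly the gap closed by the \emph{constructive} content of Theorem \ref{theorem:main}, which furnishes a closed $b(L)$-braid with at most $f(b(L))(-\chi(L)+b(L))$ crossings; it is this effective bound, and not the mere finiteness asserted by Theorem \ref{theorem:BM}, that makes the search terminate. The remaining ingredients, computability of $\chi(L)$ and decidability of link equivalence, are classical but substantial, and I would invoke rather than reprove them.
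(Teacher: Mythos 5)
Your proposal is correct and takes essentially the same route as the paper: both arguments rest on the constructive content of Theorem \ref{theorem:main} (an explicit closed $b(L)$-braid representative with at most $f(b(L))(-\chi(L)+b(L))$ crossings), followed by enumeration of the finitely many bounded-length braid words and an appeal to the decidability of link recognition. Your only addition is to make explicit that the bound $C$ must itself be computed, via the computability of $\chi(L)$, a detail the paper leaves implicit.
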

\begin{proof}
Our proof of Theorem \ref{theorem:main} actually shows that a link $L$ has a closed $b(L)$-braid diagram that has at most $f(b(L))(-\chi(L)+b(L))$ crossings. There are finitely many closed $n$-braid diagram with less than or equal to $c$ crossings. Since one can effectively enumerate such closed braid diagrams, by checking all the possibilities (this is possible since one can algorithmically check a given diagram $D$ represents $L$ or not \cite{Ha1,Ha2,He}) one can determine $b(L) \leq n$ or not algorithmically.
\end{proof}

It is desirable to give more direct algorithm of the braid index problem that avoids to use knot recognition, in a spirit of the Birman-Hirsch unknotting algorithm \cite{BH}. Also, it is an interesting problem to determine the computational complexity of the braid index problem.

\section*{Acknowledgement}
The author has been partially supported by JSPS KAKENHI Grant Number 19K03490,16H02145. He would like to thank Joan Birman for stimulating comments for earlier version of the paper, and would like to thank to Sorsen Rebecca and Keiko Kawamuro pointing out an error in the earlier version of the paper.

\section{Proof of Theorem \ref{theorem:main}}

\subsection{Quick review of braid foliation}
In this section we briefly review Birman-Menasco's braid foliation techniques. For details of braid foliation theory, we refer to \cite{LM} or \cite{BF}.

Let $A$ be the unknot in $S^{3}$ and fix a fibration $\pi:S^{3} \setminus A \rightarrow S^{1}$ whose fiber $S_{t}=\pi^{-1}(t)$ is a disk bounded by $A$. An oriented link $L$ is a \emph{closed $n$-braid} with axis $A$ if $L$ positively transverse to every fiber $S_{t}$ at $n$ points.

Let $F$ be an oriented incompressible Seifert surface of a closed $n$-braid $L$. We put $F$ so that $\{F\cap S_{t}\}$ induces a singular foliation $\mathcal{F}$ of $F$ satisfying the following properties;
\begin{itemize}
\item[(i)] $A$ and $F$ transversely intersect. Moreover, each $v \in A \cap F$ is  an \emph{elliptic} singular point of $\mathcal{F}$; there is a disk neighborhood $U_v$ of $v$ in $F$ such that the foliation $\mF$ on $U_v$ is the  radial foliation with node $v$.
\item[(ii)] All but finitely many fibers $S_{t}$ transversely intersect with $F$. Each exceptional fiber $S_{t}$ is tangent to $F$ at exactly one point $h$ in the interior of $F$, as a saddle tangency that appears as a \emph{hyperbolic} singular point of $\mathcal{F}$.
\item[(iii)] Each leaf of $\mathcal{F}$, a connected point of $S_{t}\cap F$, transverse to $L=\partial F$.
\end{itemize}
We call the foliation $\mathcal{F}$ satisfying these properties \emph{braid foliation}.

An elliptic singular point $v$ is \emph{positive} (resp. negative) if the sign of intersection of $A \cap F$ at $v$ is positive (resp. negative). A hyperbolic singular point $h \in F\cap S_{t}$ is \emph{positive} (resp. negative) if the positive normal direction $\vec{n}_F$ of $F$ at $h$ agrees (resp. desagrees) with that of $S_{t}$ at $h$ (see Figure \ref{fig:sign})\footnote{Although in the following argument the sign of hyperbolic point does not appear explicitly, it plays a crucial role in the proof of Proposition \ref{prop:BM} below.}.

\begin{figure}[htbp]
\begin{center}
\includegraphics*[bb= 163 551 442 715,width=90mm]{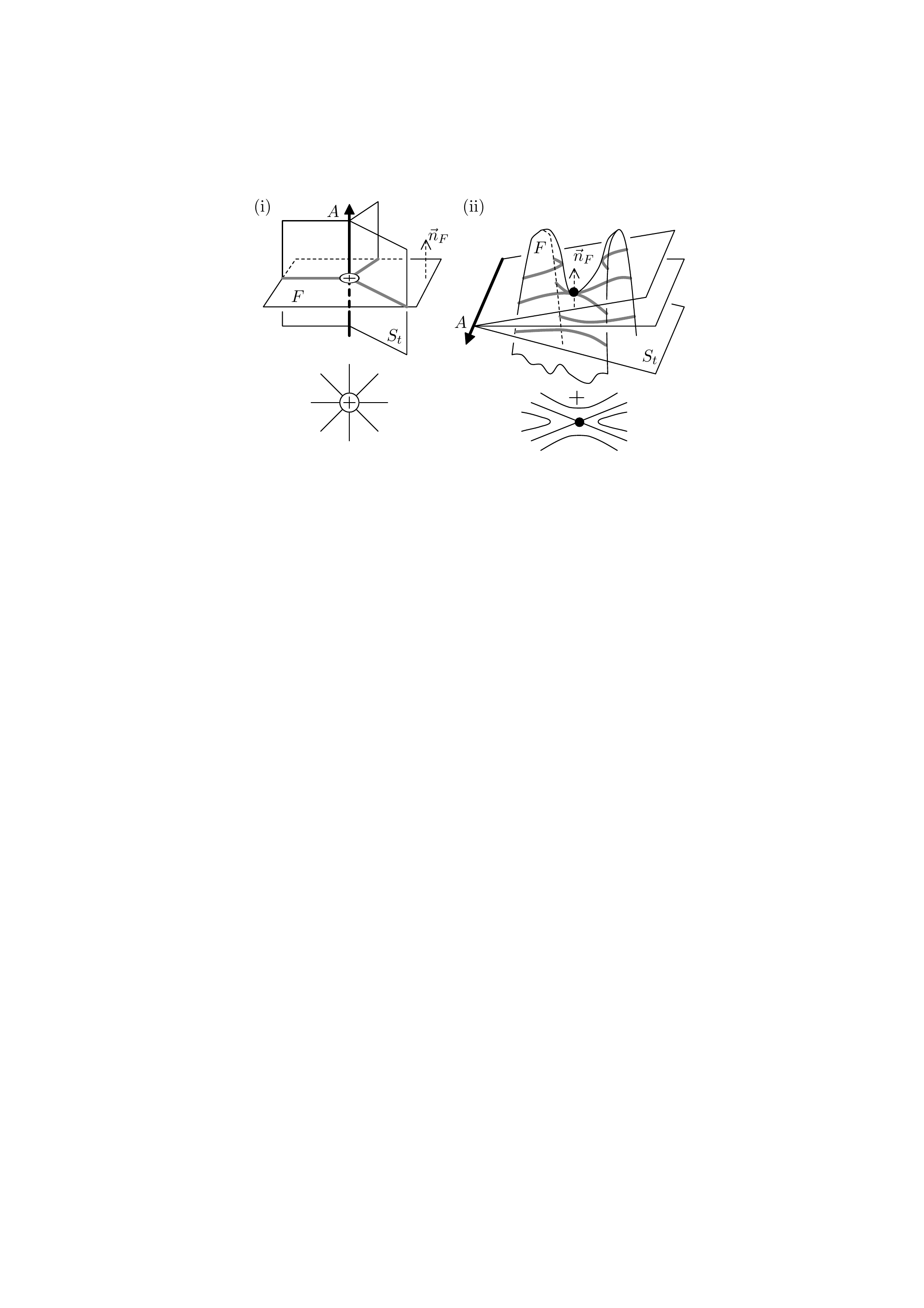}
\caption{Singular points and its signs of braid foliation}
\label{fig:sign}
\end{center}
\end{figure}

If a connected component $c$ of $S_{t} \cap F$ is a simple closed curve, it bounds a disk in $S_{t}$. Since $F$ is incompressible, by standard innermost circle argument one can remove such leaves by isotopy of $F$. 
Thus in the following, we always assume that the leaf of $\mathcal{F}$ is either
\begin{itemize}
\item \emph{a-arc}: an arc connecting a positive elliptic point and a point of $L=\partial F$, or,
\item \emph{b-arc}: an arc connecting two elliptic points of opposite signs.
\end{itemize} 
Moreover, with no loss of generality, we may assume that each b-arc $b$ in $S_t$ is \emph{essential} as a properly embedded arc in the punctured disk $S_t \setminus (S_t \cap L)$. In other words, both components of $S_t \setminus b$ are pierced by the braid $L=\partial F$.

According to the types of nearby leaves, hyperbolic points of $\mF$ are classified into three types, $aa$, $ab$ and $bb$ and each hyperbolic point has a canonical foliated neighborhood as shown in Figure \ref{fig:tiles} which we call \emph{$aa$-tile, $ab$-tile, and $bb$-tile}. 

\begin{figure}[htbp]
\begin{center}
\includegraphics*[bb=190 627 421 715,width=75mm]{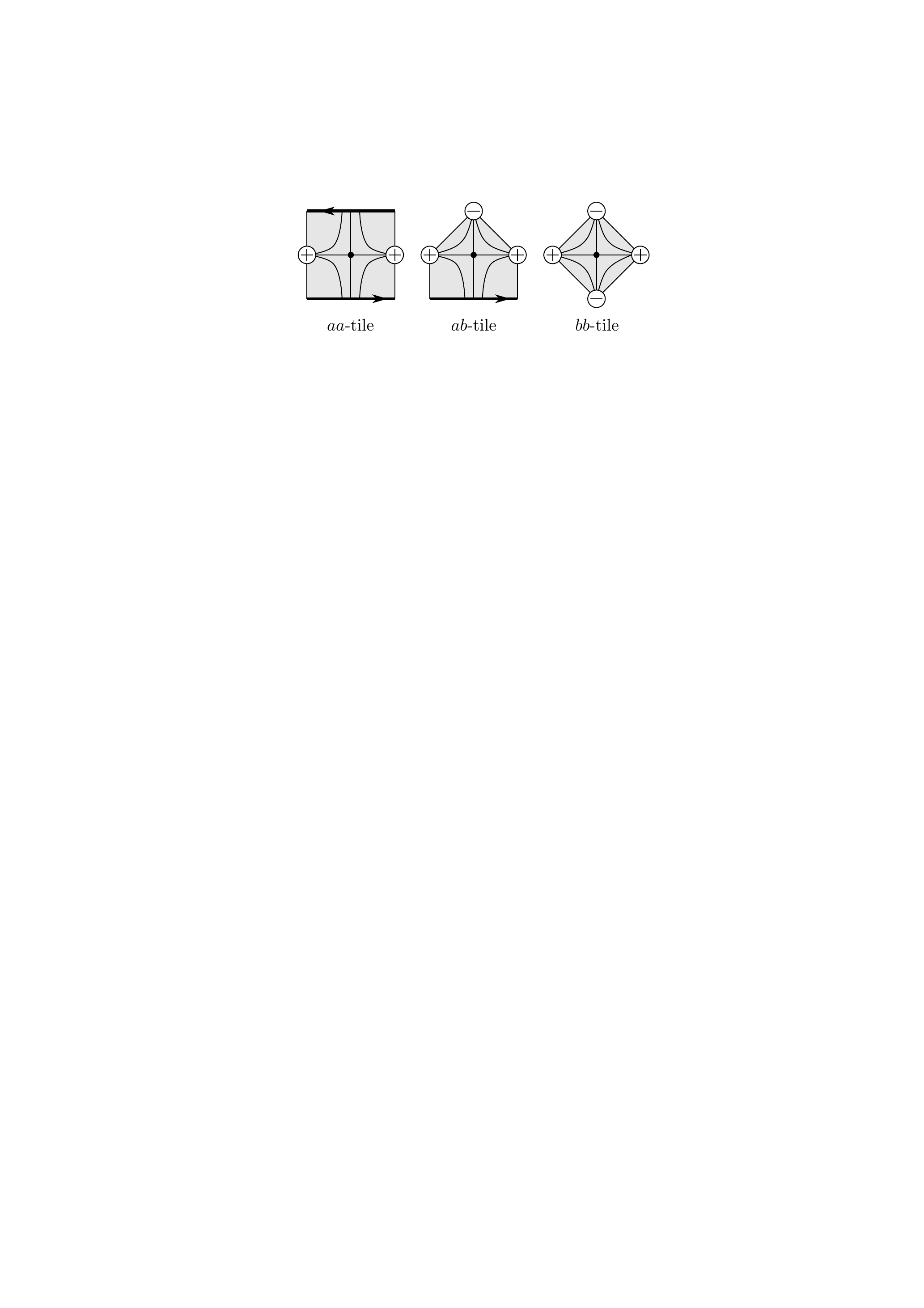}
\caption{Tiles}
\label{fig:tiles}
\end{center}
\end{figure}

A decomposition of $F$ into tiles induces a cellular decomposition of $F$;
The $2$-cells are tiles, the $1$-cells are $a$-arcs and $b$-arcs that appear as the boundary of tiles, and the $0$-cells are elliptic points and the end points of $a$-arc 1-cells.

\subsection{Euler characteristic equality}

We say that an elliptic point is of \emph{type $(\alpha,\beta)$} if the valence  is $\alpha+\beta$ and it is a boundary of $\alpha$ $a$-arc 1-cells and $\beta$ $b$-arc $1$-cells. 
We denote by $V(\alpha,\beta)$ be the number of elliptic points of type $(\alpha,\beta)$.

By a standard euler characteristic counts we have the following equality which we call the \emph{euler characteristic equality}, or, the \emph{fundamental equality of braid foliation}; 
\begin{align}
\label{eqn:euler}
&2V(1,0)+2V(0,2)+V(0,3)+V(1,1)-4\chi(F)  \\
& \qquad \qquad = V(2,1)+2V(3,0)+\sum_{v\geq4}\sum_{\alpha=0}^{v}(v+\alpha-4)V(\alpha,v-\alpha) \nonumber
\end{align}

The following result plays the fundamental role in the braid foliation theory and explain why exchange move is a fundamental operation in braid foliation; using exchange move one can simplify the braid foliation (or braid itself, by destabilization) so that it contains no low-valency vertices.
In a light of the euler characteristic equality (\ref{eqn:euler}), this gives a strong constraint for $V(\alpha,\beta)$.

\begin{proposition}\cite[Lemma 4]{BM6}
\label{prop:BM}
If $n=b(L)$, by applying exchange move we may assume that
$V(1,0)=V(0,2)=V(0,3)=V(1,1)=0$.
\end{proposition}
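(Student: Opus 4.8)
The plan is to eliminate each of the four low-valency vertex types listed in Proposition \ref{prop:BM} by a local analysis of the braid foliation near such a vertex, showing in each case that an exchange move (possibly together with an isotopy that removes an inessential $b$-arc or a destabilization) strictly reduces some complexity measure while preserving the braid index $n=b(L)$. Concretely, I would fix a complexity such as the number of hyperbolic singular points (equivalently, the number of tiles), argue that the foliation near a low-valency vertex has one of a short list of combinatorial configurations, and check that each configuration admits a simplifying move. Since the complexity is a nonnegative integer that strictly decreases, the process terminates in a foliation with $V(1,0)=V(0,2)=V(0,3)=V(1,1)=0$.

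The key steps, in order, are as follows. First, I would dispose of the $V(1,0)$ vertices: a type $(1,0)$ vertex is a positive elliptic point meeting a single $a$-arc, so the foliation near it exhibits a ``trivial loop'' that corresponds to a removable kink of $\partial F$; this is a classical \emph{destabilization} configuration, which reduces the braid index $n$ and so cannot occur once $n=b(L)$ is minimal — hence after minimizing $n$ we already have $V(1,0)=0$, or the offending vertex is removed without increasing $n$. Second, I would treat $V(0,2)$, a vertex meeting exactly two $b$-arcs: here the two $b$-arcs together bound a foliated disk whose only singularity is the vertex, and the standard argument shows either the vertex is inessential (the enclosed region can be pushed across the axis, removing an elliptic–hyperbolic pair) or one exposes an exchange move that decreases the number of elliptic points. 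Third, the $V(1,1)$ and $V(0,3)$ cases are handled by the same philosophy: one identifies the local tile configuration ($ab$- or $bb$-tiles arranged around the vertex), and finds a $b$-arc foliation pattern to which an exchange move applies, after which a further innermost-disk isotopy removes the now-inessential $b$-arc. Throughout, I would invoke the already-established convention that all $b$-arcs are essential and all simple-closed-curve leaves have been removed, which is what guarantees that the simplified foliation still satisfies properties (i)–(iii).

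The main obstacle I expect is the case analysis for the mixed and higher of the low-valency types, specifically verifying that an exchange move is genuinely available and that applying it does not create a \emph{new} low-valency vertex elsewhere, so that the chosen complexity really does decrease monotonically. The subtlety is bookkeeping: an exchange move rearranges the cyclic order of $b$-arcs around two vertices, and one must confirm that the net effect on the tile count (or on the weighted sum appearing in the euler characteristic equality \eqref{eqn:euler}) is a strict decrease rather than a mere rearrangement. The sign of the hyperbolic points, flagged in the footnote of the excerpt, is precisely the ingredient that makes the exchange move legitimate — one needs the two $\sigma_{n-1}$-crossings being interchanged to have the correct relative sign — so I would pay particular attention to tracking these signs in the $bb$-tile configurations. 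Because the underlying result is Lemma 4 of \cite{BM6}, I would ultimately cite that local analysis rather than reprove every configuration, and use it as a black box to conclude that iterated exchange moves achieve $V(1,0)=V(0,2)=V(0,3)=V(1,1)=0$.
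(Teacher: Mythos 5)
The paper offers no proof of this proposition beyond the citation to \cite[Lemma 4]{BM6}, and your proposal ultimately defers to that same citation, so this is essentially the same approach as the paper. Your sketch of the underlying mechanism — destabilization for type $(1,0)$ vertices, exchange moves governed by a strictly decreasing complexity for types $(0,2)$, $(1,1)$, $(0,3)$, with attention to the signs of the adjacent hyperbolic points — is a fair outline of Birman--Menasco's actual argument.
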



\subsection{Braid foliation and crossing number}

One can read the braid from the braid foliation \cite{BH,BM1,It-genus}; one $aa$-tile gives rise to a braid of the form
\[\sigma_{i,j}:=(\sigma_{i}\sigma_{i+1}\cdots \sigma_{j-2})\sigma_{j-1}^{\pm 1}(\sigma_{i}\sigma_{i+1}\cdots \sigma_{j-2})^{-1} \quad (1\leq i<j \leq n)\]
and one $ab$-tile gives rise to a braid of the form,
\[ (\sigma_{i}\sigma_{i+1} \cdots \sigma_{j-1})^{\pm 1}, \mbox{or, } (\sigma_{j-1}\sigma_{j-2}\cdots \sigma_{i})^{\pm 1} \quad (1\leq i<j \leq n).\]
A bb-tile does not affect the boundary the braid $L=\partial F$.

One useful way to see this is to use a movie presentation, a sequence of slice $S_t \cap F$ that describes how the arcs in $S_t$ changes as $t$ increase as we illustrate in Figure \ref{fig:braidmovie}.

\begin{figure}[htbp]
\begin{center}
\includegraphics*[bb=169 486 441 715,width=100mm]{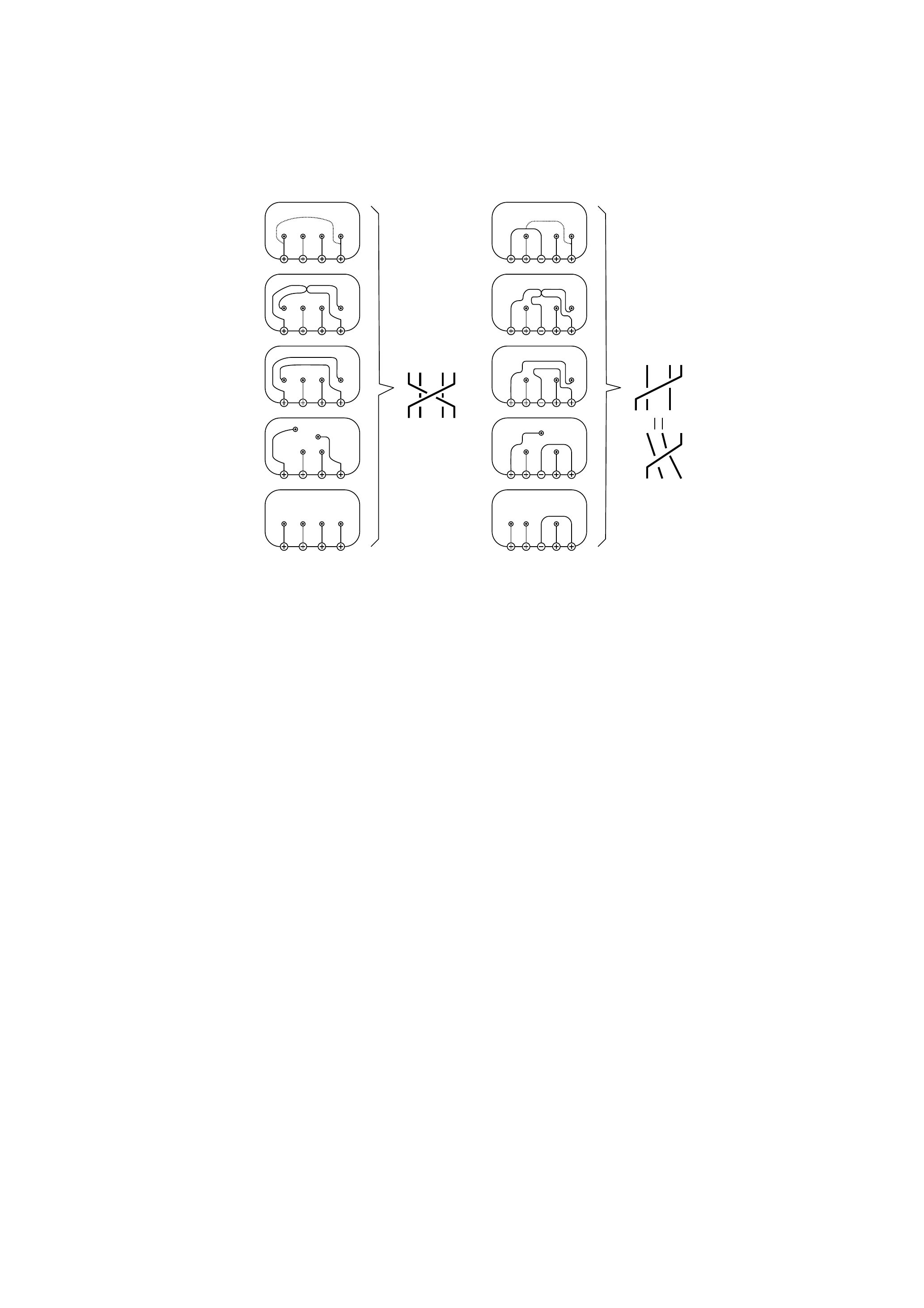}
\caption{How aa- and ab-singularity affect the braid $L=\partial F$}
\label{fig:braidmovie}
\end{center}
\end{figure}

Let $R_{aa}$ and $R_{ab}$ be the number of $aa$ and $ab$ tiles in the braid foliation, respectively. Since the above discussion shows that one aa-tile provides a braid with at most $2(n-2)+1=2n-3$ crossings and one ab-tile provides a braid with at most $(n-1)$ crossings, we have an estimate of the crossing number via the braid foliation
\[ c(L) \leq(2n-3) R_{aa}+ (n-1)R_{ab}.\]
With a little more argument we slightly improve the estimate as follows;

\begin{proposition}
\label{prop:braidfol-cross}
Let $L$ be a closed $n$-braid  and $F$ be an incompressible Seifert surface admitting a braid foliation. 
\begin{enumerate}
\item[(i)] If $R_{ab} =0$ then 
\[ c(L) \leq \begin{cases}
R_{aa} & (n=2)\\
\frac{5}{3}R_{aa} & (n=3) \\
(2n-5)R_{aa} & (n>4) \\
\end{cases}\]
\item[(ii)] If $R_{ab} \neq 0$ then
\[ c(L) \leq (2n-5)R_{aa} + (n-3) R_{ab} \]
\end{enumerate}
\end{proposition}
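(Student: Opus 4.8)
The plan is to build an explicit diagram of $L$ from the movie presentation of the braid foliation, count its crossings tile by tile, and then shave two crossings off the contribution of every $aa$- and every $ab$-tile. First I would record the braid word $\beta$ with $\widehat\beta = L$ as the ordered product, over the hyperbolic singular points taken in order of increasing $t$, of the local factors of Figure \ref{fig:braidmovie}: an $aa$-tile contributes the band generator $\sigma_{i,j}$ of length $2w-1$, an $ab$-tile contributes $(\sigma_i\cdots\sigma_{j-1})^{\pm 1}$ of length $w$, and a $bb$-tile contributes nothing, where $w = j-i$ is the width of that tile. Since $1 \le w \le n-1$, reading off the word already gives the crude bound $c(L) \le (2n-3)R_{aa} + (n-1)R_{ab}$, and the whole statement amounts to improving the two coefficients to $2n-5$ and $n-3$.

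The key point I would establish is that a band of maximal width, one whose two endpoints are the extreme strands $1$ and $n$, can always be avoided: the outer factor $\sigma_i\cdots\sigma_{j-2}$ of $\sigma_{i,j}$ and its inverse run over the same intermediate strands, so by an isotopy of $F$ pushing this pair of passes to the short side of the fiber disk one frees an extreme strand and lowers the width, deleting exactly two Artin letters from a full-width band. Concretely I would show that whenever an $ab$-tile is present one may assume every $aa$-band has $w \le n-2$ and every $ab$-band has $w \le n-3$; summing $2w-1 \le 2n-5$ over $aa$-tiles and $w \le n-3$ over $ab$-tiles then yields (ii). The same width bound for $aa$-tiles is available in the pure $aa$ case once $n \ge 4$, since there are then enough intermediate strands to perform the isotopy, and this gives (i) for $n \ge 4$ as the $R_{ab}=0$ specialization.

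The two smallest indices are genuinely exceptional and I would treat them by hand. For $n=2$ every $aa$-tile is a single $\sigma_1^{\pm 1}$, so $c(L) \le R_{aa}$ with nothing to improve. For $n=3$ the only bands are $\sigma_1^{\pm 1},\sigma_2^{\pm 1}$ of length one and $\sigma_{1,3} = \sigma_1\sigma_2^{\pm 1}\sigma_1^{-1}$ of length three; when an $ab$-tile is present the width bound still forbids $\sigma_{1,3}$ and yields $c(L)\le R_{aa}$ (here the coefficient $n-3$ vanishes, so $ab$-tiles are free), but in the pure $aa$ case the single middle strand leaves no room to push a full-width band aside. There the sharpened factor $\tfrac{5}{3}$ must instead come from an amortized estimate bounding the proportion of wide bands $\sigma_{1,3}$ by one third, which is exactly where the special geometry of a minimum genus Bennequin surface of a closed $3$-braid, as exploited in \cite{It-fertile}, enters.

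The step I expect to be the crux is the simultaneous realizability of the width reductions. Rerouting one band changes the positions of the strands seen by all later tiles, so a tile-by-tile shortening of the braid word is not obviously globally consistent; to sidestep this I would carry out the reductions as isotopies in the ambient surface $F$ and read their effect back through the movie, so that each reduction stays local to its own tile and the others are untouched. Making this bookkeeping precise, pinning down exactly when a full-width band is unavoidable, and establishing the delicate one-third count behind the factor $\tfrac{5}{3}$ at $n=3$, are where the real work of the proof lies.
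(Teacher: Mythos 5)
Your setup is right (the tile-by-tile dictionary, the crude bound $(2n-3)R_{aa}+(n-1)R_{ab}$, and the observation that everything hinges on controlling full-width bands), but the mechanism you propose for the improvement does not work, and the two ideas that actually carry the paper's proof are absent. A band whose endpoints lie on the extreme strands $1$ and $n$ cannot be shortened by rerouting inside the fiber disk: every intermediate strand must be crossed an even positive number of times by the out-and-back passes, so any arc joining the first and last punctures costs at least $2(n-2)+1=2n-3$ Artin letters. To make it cheaper you must change \emph{which} punctures the band attaches to, and that is not a local isotopy of one tile --- it moves the strands of $L$ themselves and is therefore a global operation. The paper's part (i) does exactly this globally: conjugation by the full cycle $\sigma_1\cdots\sigma_{n-1}$ permutes the band types $\sigma_{1,2},\sigma_{2,3},\dots,\sigma_{n-1,n},\sigma_{1,n}$ cyclically, so one may choose the conjugate in which the full-width type is the \emph{least} frequent of these $n$ types, hence at most $R'/n$ of them. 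Full-width bands are never eliminated; they are only shown to be rare, and the coefficients $\tfrac{5}{3}$ (for $n=3$, where $R'=R_{aa}$) and $2n-5$ (for $n\geq 4$) fall out of the resulting average $\tfrac{3n-4}{n}$. In particular the $\tfrac13$ proportion you correctly guessed at for $n=3$ comes from this elementary pigeonhole over conjugates, not from the Bennequin-surface property of $3$-braids.

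For part (ii) you assert the width bounds $w\leq n-2$ for $aa$-bands and $w\leq n-3$ for $ab$-bands without tying them to the hypothesis $R_{ab}\neq 0$, and your isotopy mechanism cannot supply them. The actual argument is foliation-theoretic: an $ab$-tile forces a negative elliptic point $v$, and the $b$-arc emanating from the leftmost such $v$ is essential in every fiber, so it separates the leftmost puncture from the rightmost puncture for all $t$. This forbids any $aa$-tile from producing $\sigma_{1,n}$ (giving $\leq 2n-5$ per $aa$-tile), forbids $ab$-tiles from producing the width-$(n-1)$ words, and --- the extra step you would also need --- forbids width-$(n-2)$ $ab$-words because such a singularity would render the $b$-arc at $v$ boundary-parallel, contradicting essentiality. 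Without the conjugation trick in (i) and the essential-$b$-arc separation in (ii), the proposal reduces to the crude bound plus an unproved claim, which is precisely the content of the proposition.
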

\begin{proof}
(i): Assume that $R_{ab}=0$. Then case $n=2$ is obvious so we assume that $n>2$. Let $R_1,R_2,\ldots,R_{n}$ be the number of aa-tiles that gives rise to a braid $\sigma_{i,i+1}$ and let $R'=R_1+R_2+\cdots +R_n$.
Here indices are understood by modulo $n$ so we regard $\sigma_{n,n+1}$ as $\sigma_{1,n}$.  
Since 
\[ (\sigma_1\sigma_2\cdots \sigma_{n-1})\sigma_{i,j}(\sigma_1\sigma_2\cdots \sigma_{n-1})^{-1} =\sigma_{i+1,j+1},\]
by taking conjugates we may assume that $R_{n} \leq R_1,\ldots,R_{n-1}$.
Thus $R_n \leq \frac{1}{n} R'$. 
Each ab-tile counted in $R_1,\ldots,R_{n-1}$ provides a braid with one crossing.
Each of rest of the aa-tiles provides a braid with at most $2(n-1)-3=2n-5$ crossings so 
\begin{align*}
c(L) &\leq (2n-3)\frac{R'}{n}+ \frac{n-1}{n}R' + (2n-5) (R_{aa}-R')\\
& = \frac{3n-4}{n} R' +(2n-5)(R_{aa}-R')\\
& \leq \begin{cases} 
\frac{3n-4}{n}R_{aa} & (n=3) \\
(2n-5)R_{aa} & (n\geq 4) \\
\end{cases}
\end{align*}

(ii) When $R_{ab}\neq 0$, the braid foliation of $F$ contains at least one negative elliptic point. We put a negative elliptic point $v$ so that it is leftmost among all the ellitpic poionts. At each $t$, a b-arc from $v$ always separates $S_t$ into two components so that both component contains at least one puncture point $L \cap S_t$. Since $v$ is leftmost, the leftmost puncture point, the 1st strand of the braid, and the rightmost puncture point, the $n$-th strand of the braid, are always separated by the b-arc from $v$. Hence no aa-tile can provide a braid $\sigma_{1,n}$, so one $aa$-tile can provide a braid having at most $2n-5$ crossings (see Figure \ref{fig:restriction} (i)).

By the same reason no $ab$-tile can provide a braid of the form 
\[ (\sigma_1\cdots \sigma_{n-1})^{\pm 1}, (\sigma_{n-1}\cdots \sigma_{1})^{\pm 1}.\]
Moreover, an $ab$-tile cannot produce a braid of the form
\[ (\sigma_1\cdots \sigma_{n-2})^{\pm 1}, (\sigma_2\cdots \sigma_{n-1})^{\pm 1}, (\sigma_{n-2}\cdots \sigma_{1})^{\pm 1}, (\sigma_{n-1}\cdots \sigma_{2})^{\pm 1}\]
either, because if such a braid is produced by an $ab$-tile, the b-arc from $v$ becomes boundary-parallel (see Figure \ref{fig:restriction} (ii)). Thus one ab-tile actually can provide a braid having at most $(n-3)$ crossings.
Therefore we conclude
\[ c(L) \leq (2n-5)R_{aa}+ (n-3) R_{ab}.\]

\begin{figure}[htbp]
\begin{center}
\includegraphics*[bb=148 555 448 723,width=90mm]{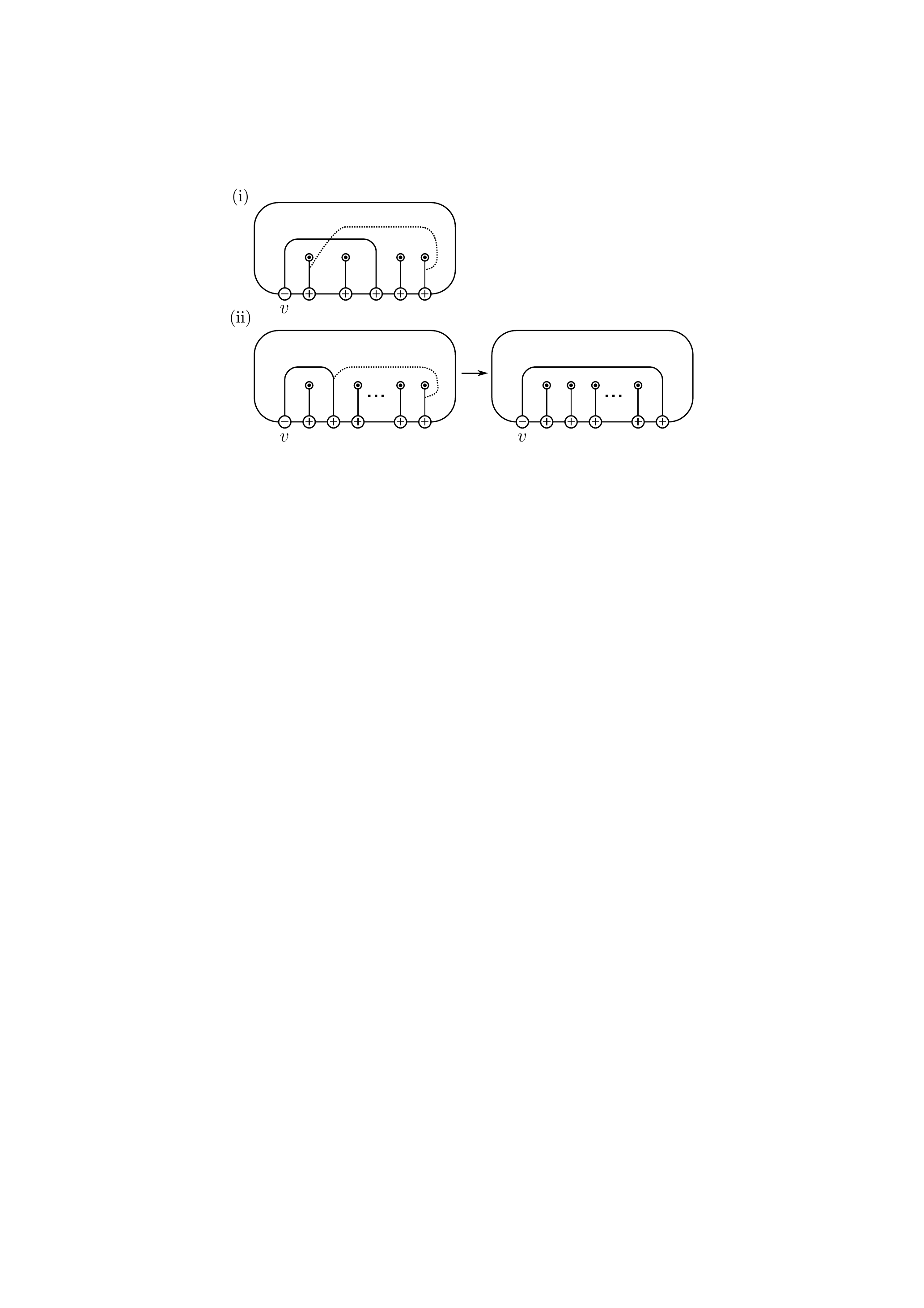}
\caption{(i): The b-arc from the leftmost negative elliptic point $v$ prevents to form an $aa$-singular point producing $\sigma_{1,n}$. (ii): An $ab$-singular point producing $(\sigma_{n-1}^{-1}\cdots \sigma_{2}^{-1})$ makes the b-arc from $v$ boundary-parallel.}
\label{fig:restriction}
\end{center}
\end{figure}

\end{proof}

\subsection{Completion of proof}

Once all the necessary backgrounds or results from braid foliation theory are prepared, the proof of Theorem \ref{theorem:main} is essentially an elementary counting argument.

\begin{proof}[Proof of Theorem \ref{theorem:main}]

Let $D$ be a minimum crossing diagram of $L$. By Seifert's algorithm, we get a Seifert surface $\Sigma_D$ of $L$ with $\chi(\Sigma_D)=s(D)-c(L)$, where $s(D)$ denotes the number of Seifert circles. Since $b(L) \leq s(D)$ we get
\[ \chi(L)\geq \chi(\Sigma_D) = s(D)-c(L) \geq b(L)-c(L).\] 

We prove the upper bound of $c(L)$. By Proposition \ref{prop:braidfol-cross}, it is sufficient to give an upper bound of $R_{aa}$ and $R_{ab}$. In the following argument, we give an estimate of $2R_{aa}+R_{ab}$.

Let $F$ be a Seifert surface of $L$ admitting a braid foliation such that $\chi(F)=\chi(L)$ and that $\partial F$ is a closed $b(L)$-braid.
By Proposition \ref{prop:BM} we may assume that $V(1,0)=V(0,2)=V(0,3)=V(1,1)=0$. In particular, $V(\alpha,v-\alpha)=0$ whenever $\alpha+v-4<0$.

Let $V^{\pm}(\alpha,\beta)$ be the number of positive and negative elliptic points of type $(\alpha,\beta)$. Note that $V^{+}(\alpha,\beta)=V(\alpha,\beta)$ whenever $\alpha>0$ since the elliptic point is a boundary of $a$-arc only if it is positive. Since $\partial F$ is a closed $b(L)$-braid, the algebraic crossing number of the axis $A$ and $F$ is $b(L)$ so
\begin{equation}
\label{eqn:braid-index}
b(L) = \sum_{v\geq 0}\sum_{\alpha=0}^{v} V^{+}(\alpha,v-\alpha)-V^{-}(\alpha,v-\alpha)
\end{equation}

Each tile contains exactly two positive ellitpic points so
\[ 2R_{aa}+2R_{ab}+2R_{bb} = \sum_{v\geq 0}\sum_{\alpha=0}^{v} vV^{+}(\alpha,v-\alpha).\]
Similarly, each ab-tile contains one negative elliptic point, and each bb-tile contains two negative elliptic points so 
\[ R_{ab}+2R_{bb} = \sum_{v\geq 0}\sum_{\alpha=0}^{v} vV^{-}(\alpha,v-\alpha).\]
Thus 
\begin{equation}
\label{eqn:tile-vertex}
2R_{aa}+R_{ab} =  \sum_{v\geq 0}\sum_{\alpha=0}^{v}vV^{+}(\alpha,v-\alpha) -v V^{-}(\alpha,v-\alpha).
\end{equation} 

By (\ref{eqn:tile-vertex}) and (\ref{eqn:braid-index}) we get
\begin{align*}
2R_{aa}+R_{ab}-4b(L) &= \sum_{v\geq 0}\sum_{\alpha=0}^{v}(v-4)V^{+}(\alpha,v-\alpha) -(v-4)V^{-}(\alpha,v-\alpha)\\
&\hspace{-1.5cm}= \sum_{v\geq 0}\sum_{\alpha=1}^{v}(v-4)V(\alpha,v-\alpha) + \sum_{v> 0}(v-4)V^{+}(0,v)- \sum_{v> 0}(v-4)V^{-}(0,v).
\end{align*}
Since we have seen that $V(0,v)=0$ for $v <4$, 
\begin{equation}
\label{eqn:tile1}
2R_{aa}+R_{ab}-4b(L) \leq  \sum_{v\geq 0}\sum_{\alpha=0}^{v}(v-4)V(\alpha,v-\alpha)
\end{equation}

On the other hand, let $E_a$ be the number of 1-cells which are $a$-arcs. By counting $E_a$ as a boundary of 2-cells, $E_a = 2R_{aa}+R_{ab}$. Similarly by counting $E_a$ as edges connecting 0-cells we get
\begin{equation}
\label{eqn:edge}
2R_{aa}+R_{ab} = E_a = \sum_{v>0}\sum_{\alpha=0}^{v}\alpha V(\alpha,v-\alpha)
\end{equation}
Thus by (\ref{eqn:tile1}), (\ref{eqn:edge}), and the euler characteristic equality (\ref{eqn:euler}) 
\[ 2(2R_{aa}+R_{ab})-4b(L) \leq \sum_{v>0}\sum_{\alpha=0}^{v}(v+\alpha-4) V(\alpha,v-\alpha) = -4\chi(L) \]
so we eventually arrive at the inequality
\[ 2R_{aa}+R_{ab} \leq -2\chi(L)+2b(L) \]
By Proposition \ref{prop:braidfol-cross} we conclude
\[ c(L) \leq \begin{cases}
(-\chi(F)+b(L)) & b(L)=2 \\
\frac{5}{3}(-\chi(F)+b(L)) & b(L) = 3, \\
(2b(L)-5)(-\chi(F)+b(L)) & b(L) > 4.
\end{cases}
\]
\end{proof}

\end{document}